\newtheorem{theorem}{Theorem}
\newtheorem{proposition}[theorem]{Proposition}
\newtheorem*{markov}{Markov's inequality}
\newtheorem*{chernoff}{Chernoff's bound}
\newcommand{\Exp}{\,\mathbb{E}}
\renewcommand{\Pr}{\,\mathbb{P}}
\DeclareMathOperator{\Bin}{Bin}
\DeclareMathOperator{\ch}{ch}
\title{Improper choosability and {P}roperty {B}}
\author{
Ross J. Kang\thanks{This work was completed while the author was at Durham University, supported by EPSRC, grant EP/G066604/1. He is currently supported by a NWO Veni grant.} \\
Centrum Wiskunde \& Informatica \\
PO Box 94079, 1090 GB \\
Amsterdam, The Netherlands \\
{\tt ross.kang@gmail.com}
}
\begin{document}

\maketitle

\begin{abstract}
A fundamental connection between list vertex colourings of graphs and Property B (also known as hypergraph 2-colourability) was already known to Erd\H{o}s, Rubin and Taylor~\cite{ERT80}.  In this article, we draw similar connections for improper list colourings.  This extends results of Kostochka~\cite{Kos02}, Alon~\cite{Alo92,Alo93,Alo00}, and Kr\'al' and Sgall~\cite{KrSg05} for, respectively, multipartite graphs, graphs of large minimum degree, and list assignments with bounded list union.
\end{abstract}

\section{Introduction and background}
\label{sec:intro}

This work is an attempt to galvanise investigations into improper choosability outside the setting of topologically embedded graph families.  We focus on extending the well-known relationship between choosability and Property B.  This compels us to consider improper choosability for ($t$-improperly) multipartite graphs, minimum degree graphs and list assignments whose list union is bounded.  Our results depend on probabilistic methods.

Let $G = (V,E)$ be a simple, undirected graph.
We refer to a {\em colouring} of $G$ as being an arbitrary mapping $c: V\to \mathbb{Z}^+$.  If $|c(V)| = k$ for some integer $k$, then we call $c$ a {\em $k$-colouring}.
A colouring $c$ of $G$ is {\em proper} if $c(u)\ne c(v)$ for any $uv \in E$; equivalently, $c$ is proper if $c^{-1}(c(V))$ is a partition of $V$ such that each part induces a subgraph with maximum degree $0$.
The latter definition of {\em proper} directly generalises as follows.  For a non-negative integer $t$, a colouring $c$ of $G$ is {\em $t$-improper} if $c^{-1}(c(V))$ yields a partition of $V$ such that each part induces a subgraph of maximum degree at most $t$; in other words, $c$ is $t$-improper if for every vertex $u$ the number of neighbours $v$ of $u$ such that $c(u)=c(v)$ is at most $t$.  We say $G$ is {\em $t$-improperly $k$-colourable} if there exists a $t$-improper $k$-colouring of $G$.  The {\em $t$-improper chromatic number $\chi^t(G)$} of $G$ is the least $k$ such that $G$ is $t$-improperly $k$-colourable.
This parameter captures the usual {\em chromatic number $\chi(G)$} of $G$ as a special case, $t=0$.

This natural form of generalised graph colouring has been developed in many contexts.  Its provenance extends at least as far back as the 1980's, when it was independently introduced by several authors~\cite{AnJa85,CCW86,HoSt86}; indeed, it may well be argued that Lov\'asz~\cite{Lov66} (in the mid-1960's, while he was a teenager) was the first to study this type of vertex partition.  In the literature, it is sometimes referred to as {\em defective colouring}, {\em $(k,t)$-colouring} or {\em relaxed colouring}. 

Most of our attention is restricted to the stronger {\em list} form of improper colouring --- that is, the form of graph colouring in which an adversary may place individual restrictions on the colours used at each vertex.  Let us establish a few more definitions before continuing with the background.

We refer to $[\ell] = \{1,\dots,\ell\}$ as the {\em spectrum of colours}, where $\ell$ is some positive integer.  Given a positive integer $k\le \ell$, a mapping $L: V\to \binom{[\ell]}{k}$ is called a {\em $(k,\ell)$-list-assignment} of $G$; a colouring $c$ of $G$ is called an {\em $L$-colouring} if $c(v)\in L(v)$ for any $v\in V$.  For a non-negative integer $t$, we say $G$ is {\em $t$-improperly $(k,\ell)$-choosable} if for any $(k,\ell)$-list-assignment $L$ of $G$ there is a $t$-improper $L$-colouring of $G$.  We say $G$ is {\em $t$-improperly $k$-choosable} if it is $t$-improperly $(k,\ell)$-choosable for any $\ell\ge k$.  The {\em $t$-improper choosability $\ch^t(G)$} (or {\em $t$-improper choice number} or {\em $t$-improper list chromatic number}) of $G$ is the least $k$ such that $G$ is $t$-improperly $k$-choosable.
Note $G$ is $t$-improperly $k$-colourable iff it is $t$-improperly $(k,k)$-choosable; thus, $\ch^t(G)\ge \chi^t(G)$.  Note also $\ch^t(G)\le\ch(G)$.

In the case $t = 0$, we omit the modifier `$0$-improper(ly)' and occasionally substitute `proper(ly)'.  Proper list colouring was introduced independently by Vizing~\cite{Viz76} and Erd\H{o}s, Rubin and Taylor~\cite{ERT80} and has been subject to the substantial scrutiny of many researchers in discrete mathematics and theoretical computer science.  The choice number has been approached from various angles including topological~\cite{Tho94}, algebraic~\cite{Alo99} and probabilistic~\cite{MoRe02}.
In this paper, we take the last-mentioned tack, in particular to generalise results of Kostochka~\cite{Kos02}, Alon~\cite{Alo92,Alo93,Alo00}, and Kr\'al' and Sgall~\cite{KrSg05}.  (Our definition of $t$-improper choosability, with a bounded spectrum, follows the formulation given in~\cite{KrSg05}.)

For $t \ge 1$, the $t$-improper list colouring problem has been studied thoroughly, though mainly within the realm of planar graphs and related graph classes.  The investigation of $t$-improper list colouring was launched by Eaton and Hull~\cite{EaHu99} and \v{S}krekovski~\cite{Skr99a}, who independently showed every planar graph to be $2$-improperly $3$-choosable, a result inspired by the remarkable proof of Thomassen~\cite{Tho94} that every planar graph is $5$-choosable.  Recently, Cushing and Kierstead~\cite{CuKi10} completed the stubborn last step in the program of $t$-improperly list colouring planar graphs, by proving every planar graph is $1$-improperly $4$-choosable.  (Voigt~\cite{Voi93} exhibited a planar graph that is not properly $4$-choosable and there are elementary examples of planar graphs that are not $1$-improperly $3$-colourable and not $k$-improperly $2$-colourable for any fixed positive $k$~\cite{CCW86}.)
There have continued to be active efforts on $t$-improper list colouring in classes related to graphs on surfaces (including the plane), often with added girth or density conditions~\cite{CZW08,DoXu09a,HaSe06,LSWZ01,Mia03,Skr99b,Skr00,Woo01,Woo02,Woo04,Woo11+,WoWo09,XuYu08,XuZh07}; many of these efforts have employed discharging techniques.
We comment that some results on improper choosability are obtained within the scope of {\em generalised list colouring}~\cite{BBM97,BDM95}, in which analogues of Brooks' and Gallai's theorems were established.

For researchers in this area, a folklorish question has been circulating for some time, at least as far back as 2004, from personal recollection.  It asks about what is the best general lower bound for the $t$-improper choosability in terms of the choosability.  (An easy argument shows that the $t$-improper chromatic number is at least a $(t+1)$th fraction of the chromatic number; this is tight.)  A corollary to our methods here is a first step for this informal problem, even if the question remains open --- see the remarks after Theorem~\ref{thm:degeneracy}.

The structure of the paper is as follows.  In Section~\ref{sec:results}, we outline and discuss our main results in detail.  In Section~\ref{sec:propB}, we review Property B, then draw a correspondence between an extremal study of families with Property B and the $t$-improper choosability of $t$-improperly colourable graphs.  In Section~\ref{sec:mindeg}, we study the relationship between the minimum degree of a graph and $t$-improper choosability.  The proofs of the main results are found in Section~\ref{sec:proofs} and then some concluding remarks are given in Section~\ref{sec:conclusion}.  Before continuing, we fix more of our notation.

\subsection{Notation}

We follow standard notation in graph theory and any concepts or notation not defined here may be found in a standard text, e.g.~\cite{BoMu08} or~\cite{Die05}.
For a graph $G$, we use $\delta(G)$, $\delta^*(G)$ and $\Delta(G)$ to denote, respectively, the minimum degree, degeneracy and  maximum degree of $G$.
We use the shorthand $[\ell] = \{1,\dots,\ell\}$.
By {\em $k$-set} or {\em $k$-subset}, we mean one with $k$ elements.

We refer to two basic probabilistic tools, cf.~\cite{MoRe02}.

\begin{markov}
For any positive random variable $X$,
\begin{align*}
\Pr(X\ge t)\le \Exp(X)/t.
\end{align*}
\end{markov}

\begin{chernoff}
For any $0\le t\le np$,
\begin{align*}
\Pr(|\Bin(n,p)-np|> t)\le 2\exp(-t^2/(3np)).
\end{align*}
\end{chernoff}

\section{Overview of the main results}
\label{sec:results}

Our main contribution is to closely tie improper choosability problems to the extremal study of Property B for families of $k$-sets.  However, we do not introduce Property B until the next section.  As we shall note in Section~\ref{sec:proofs}, our main theorems are corollary to the more detailed results given in Sections~\ref{sec:propB} and~\ref{sec:mindeg}, in combination with known bounds for the extremal behaviour of Property B.

Let us define $\mathcal{K}_t(n*j)$ to be the class of graphs $G=(V,E)$ that admit a partition $(V_1,\dots,V_j)$ of $V$ such that, for all $i,i'$ with $i\ne i'$, $|V_i|=n$, $V_i\times V_{i'}\subseteq E$ and $V_i$ induces a subgraph of maximum degree at most $t$.  Informally, we refer to $\mathcal{K}_t(n*j)$ as the class of {\em complete $t$-improperly $j$-partite graphs}.
We have determined the asymptotic behaviour of the $t$-improper choice numbers of graphs in this class.

\begin{theorem}
\label{thm:completemultipartite}
For fixed $t\ge0,j\ge 2$, for any $K \in \mathcal{K}_t(n*j)$, as $n \to \infty$,
\begin{align*}
\ch^t(K) = (1 + o(1))\frac{\ln n}{\ln \left(j/(j-1)\right)}.
\end{align*}
\end{theorem}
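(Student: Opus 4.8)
The plan is to reduce both directions to the behaviour of a \emph{random $j$-colouring of the spectrum} $[\ell]$, exploiting that the complete between-part structure of a graph $K\in\mathcal{K}_t(n*j)$ forces almost every colour to be ``owned'' by a single part. Throughout, write $q=(j-1)/j$ and fix $K$ with parts $V_1,\dots,V_j$, each of size $n$, so that $\ln(1/q)=\ln(j/(j-1))$.

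For the upper bound, take an arbitrary $(k,\ell)$-list-assignment $L$ and colour $[\ell]$ by giving each colour an independent uniform class in $[j]$, producing a partition $[\ell]=C_1\cup\cdots\cup C_j$. Call $v\in V_i$ \emph{bad} if $L(v)\cap C_i=\emptyset$; then $\Pr(v \text{ bad})=q^k$, and by a union bound over all $jn$ vertices, with positive probability no vertex is bad whenever $jn\,q^k<1$. In that case colour each $v\in V_i$ from $L(v)\cap C_i$. Because the classes $C_i$ are disjoint, no colour is ever used in two distinct parts, so every monochromatic edge lies inside some $V_i$; since $V_i$ induces maximum degree at most $t$, the colouring is automatically $t$-improper. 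Hence $\ch^t(K)\le \lceil \ln(jn)/\ln(1/q)\rceil+1=(1+o(1))\ln n/\ln(j/(j-1))$, uniformly in $t$.

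For the matching lower bound I would fix a slowly growing spectrum $\ell=\ell(n)$ (say $\ell=\Theta(k\,\mathrm{polylog}\,n)$) and, for each part independently, draw $n$ lists uniformly from $\binom{[\ell]}{k}$. The crucial structural point is that a $t$-improper $L$-colouring can be \emph{decoded} into a near-partition of $[\ell]$: completeness between parts forces any colour appearing in two or more parts to be used at most $2t$ times in total, so at most $2t\ell$ vertices receive such a ``shared'' colour. Assigning every remaining (part-private) colour to its unique part yields $\phi\colon[\ell]\to[j]$ for which, in each part, all but at most $2t\ell$ lists meet their own class. It therefore suffices to choose the random assignment so that \emph{no} $\phi$ is good for all but $\eta n$ lists per part, where $\eta=2t\ell/n=o(1)$. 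A first-moment estimate over the $j^{\ell}$ maps $\phi$, grouped by class-size profile $(a_1,\dots,a_j)$ and with the number of lists missing a given class $C_i$ controlled by Chernoff's bound, shows that the expected number of such ``$\eta$-robustly good'' $\phi$ is below $1$ once $jn\,q^{k}$ exceeds $\ell\ln j$ by a suitable constant factor; solving for $k$ produces a bad assignment for every $k\le(1-o(1))\ln n/\ln(j/(j-1))$.

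I expect the lower bound to be the main obstacle, on two fronts. First, one must kill not just the genuine partitions but the slightly larger family of $\eta$-robustly good $\phi$; near the threshold $\eta$ is a constant (depending on $t,j$) multiple of $q^k$, namely $\eta/q^k\approx 2tj/\ln j$, so the lower-tail deviation supplied by Chernoff's bound is no longer overwhelming and I anticipate paying an additive $O_{t,j}(1)$ term in $k$ — harmless for the stated asymptotics but needing care. Second, pinning the leading constant to exactly $1/\ln(j/(j-1))$ is precisely where the extremal theory of Property~B enters: the first-moment bound is really a statement about the least number of $k$-sets admitting no panchromatic $j$-colouring, and matching it to the upper bound to the claimed precision is cleanest by invoking the known asymptotics for that Property~B quantity rather than optimising the raw calculation by hand. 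Balancing $\ell$, $k$ and the class-size profile in that optimisation is the most delicate step.
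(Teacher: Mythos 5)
Your proposal is essentially correct, but it takes a genuinely different, more self-contained route than the paper, whose proof of this theorem is one line: combine Theorem~\ref{thm:multipartite,propB} (a two-way reduction between $t$-improper choosability of graphs in $\mathcal{K}_t(n*j)$ and Property~B($j$) for families of $k$-subsets of $[\ell]$) with Shabanov's bounds~\eqref{eqn:propBj} on $M(j,k)$. Your upper bound is the paper's argument with the Property~B($j$) input inlined: randomly $j$-partitioning the spectrum and colouring each part from its own class is precisely the first-moment proof that any family of fewer than $j^{-1}\bigl(j/(j-1)\bigr)^k$ $k$-sets has Property~B($j$), and this crude bound suffices here because polynomial factors are absorbed into the $o(1)$. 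Your lower bound, in turn, re-proves a robust analogue of the upper bound in~\eqref{eqn:propBj}: your decoding of a $t$-improper colouring into an ``$\eta$-robustly good'' map $\phi$ (your claim that a colour appearing in two or more parts is used at most $2t$ times in total is correct) plays the role that the paper delegates to a multiplicity-plus-pigeonhole device --- there, each set of an extremal non-B($j$) family is given to $tk+1$ vertices in each of the parts $V_2,\dots,V_j$, so that any $t$-improper colouring produces exactly disjoint witness sets meeting every member of the family, contradicting the failure of Property~B($j$) with no approximation to control. The trade-off is clear: the paper's route is modular and short, isolates where improperness enters, and inherits the best known panchromatic bounds for free; your route avoids citing that extremal literature but must redo the probabilistic construction, including the one genuinely delicate point you correctly flag --- near the threshold $\eta$ is a constant multiple of $q^k$, so you must retreat by an additive $O_{t,j}(1)$ in $k$ (e.g.\ insist $nq^k \ge A\ell(\ln j + t)$ for a large constant $A$, and handle unbalanced class-size profiles by convexity of $x\mapsto(1-x)^k$) before Chernoff's bound overcomes the $j^{\ell}$ union bound; that loss and the $\mathrm{polylog}(n)$ spectrum are harmless to the stated asymptotics, so your plan does close.
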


\noindent
In the case $t = 0$, this result was first shown by Alon~\cite{Alo92}.
With a more involved probabilistic analysis, Gazit and Krivelevich~\cite{GaKr06} have obtained a detailed description that allows for distinct part sizes; however, for $t\ge 1$ we have left this to future study.

Recall that a bound on the degeneracy $\delta^*(G)$ of a graph $G$ implies a bound on the choosability $\ch(G)$ of $G$: it is clear that $\ch(G) \le \delta^*(G)+1$ by simply noting that the graph can be coloured greedily, always picking a vertex of degree at most $\delta^*(G)$ as the next vertex to colour.  Obviously then, $\ch^t(G)\le \delta^*(G)+1$ for any $t$.  In the next theorem, we have obtained a lower bound on the $t$-improper choosability in terms of the degeneracy. (Note that the minimum degree is no smaller than the degeneracy.)
In particular, it follows that any graph has bounded $t$-improper choosability iff it has bounded degeneracy.  This was first established in the case $t=0$ by Alon~\cite{Alo93}, later tightened~\cite{Alo00}.

\begin{theorem}
\label{thm:degeneracy}
For fixed $t\ge0$, there is a function $j = j(d)$ that satisfies $j(d) = (1/2 + o(1))\log_2 d$ as $d\to \infty$ such that, if $\delta(G)\ge d$, then
$\ch^t(G)\ge j(d)$.
\end{theorem}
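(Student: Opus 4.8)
The plan is to establish this lower bound by producing a hard instance. Recall that $\ch^t(G)\ge k+1$ holds precisely when some $(k,\ell)$-list-assignment admits no $t$-improper $L$-colouring, so it suffices to construct one such assignment for the largest feasible $k$ and then to set $j(d)=k+1$. Since deleting edges cannot increase the maximum degree of any colour class, $\ch^t$ is monotone under taking subgraphs, and so I am free to replace $G$ by a convenient dense subgraph first. In particular, taking a maximum-cut bipartition yields a spanning bipartite subgraph in which every vertex keeps at least half of its neighbours — otherwise moving it across the cut would enlarge it — so I may assume $G$ is bipartite with parts $A,B$ and minimum degree at least $D:=d/2$, at the cost of only an additive constant in the final logarithm.

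I would then invoke the correspondence developed in Section~\ref{sec:propB}. There, non-colourability of a list-assignment is shown to be dual to the failure of Property B, that is, of $2$-colourability, for the family of lists regarded as subsets of the spectrum $[\ell]$: each colour class is, up to a defect of order $t$, an independent set, and a colouring sorts the vertices by the region of the spectrum containing their chosen colour. For complete multipartite graphs this duality is exact, because every cross-pair is an edge and colour classes from different parts are forced onto disjoint colours; the obstruction is then literally a non-$2$-colourable $k$-uniform family, whose least size is the extremal Property B number $m(k)=\Theta^*(2^k)$. My aim is to reconstruct this obstruction from the degree hypothesis alone: I would impose a random $(k,\ell)$-list-assignment with spectrum size $\ell=\Theta(k)$ and argue that, with positive probability, the lists seen across the $\ge D$ neighbours of each vertex realise a non-$2$-colourable configuration, so that no selection of list-colours can keep every colour class within defect $t$.

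The feasible range of $k$ is governed by when such a configuration is forced, and this is where I expect the main obstacle to lie. A minimum-degree hypothesis is purely local, and a graph of large girth contains no complete bipartite subgraph whatsoever; there is thus no global adjacency to force colour classes to be one-sided, and the obstruction must be manufactured from randomness and the degree count alone. Carrying this out should cost a square: to witness a non-$2$-colourable family inside the neighbourhood of every vertex at once, one ought to need $D$ to exceed a polynomial factor times $m(k)^2\approx 4^k$ — morally, because ruling out every candidate split of the spectrum requires seeing not a single transversal but enough pairs of lists. Solving $4^k\lesssim D=d/2$ then gives $k\le(\tfrac12+o(1))\log_2 d$, and $j(d)=k+1$ yields the asserted asymptotics; this is exactly why the constant is $\tfrac12$ rather than the value $1$ available for complete multipartite graphs in Theorem~\ref{thm:completemultipartite}. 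The technical heart is to show that these neighbourhood events, which overlap and are therefore dependent, can be made to hold simultaneously; I would control this by a second-moment estimate or the Lov\'asz Local Lemma, invoking the Chernoff bound to keep each neighbourhood's list-profile concentrated and Markov's inequality to discard the few vertices where concentration fails.

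It remains to check that passing from $t=0$ to an arbitrary fixed $t$ does not affect the leading constant. A $t$-improper colour class differs from an independent set only by a subgraph of maximum degree at most $t$, so each local count is altered by a factor depending on $t$ but not on $d$; for fixed $t$ and $d\to\infty$ this is absorbed into the $o(1)$ term. The degeneracy version mentioned in the surrounding discussion then follows by applying the theorem to a subgraph realising the degeneracy and appealing once more to the monotonicity of $\ch^t$.
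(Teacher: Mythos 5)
You have correctly identified the shape of the answer: the obstruction is a smallest family $\mathcal{F}$ of $k$-subsets of $[\ell]$ without Property B, whose size $M(k,\ell)$ is $2^k$ up to polynomial factors by \eqref{eqn:propB}, and the constant $1/2$ arises because the minimum degree must exceed roughly $M(k,\ell)^2\approx 4^k$ (this is exactly how the paper derives the theorem, by combining Theorem~\ref{thm:mindeg} with \eqref{eqn:propB}). However, your load-bearing step is the claim that if every vertex sees, among its neighbours, at least $tk+1$ copies of each member of $\mathcal{F}$, then no $t$-improper colouring from the lists exists --- and this implication is false. For $t=0$, $k=2$, $\ell=3$, take $\mathcal{F}=\{\{1,2\},\{1,3\},\{2,3\}\}$ and the $3$-regular bipartite graph with parts $\{p_1,p_1',p_2,p_2',p_3,p_3'\}$ and $\{q_1,q_1',q_2,q_2',q_3,q_3'\}$, lists $\{1,2\}$ on $p_1,p_1',q_1,q_1'$, lists $\{1,3\}$ on $p_2,p_2',q_2,q_2'$, lists $\{2,3\}$ on $p_3,p_3',q_3,q_3'$, and edges $p_1\sim q_1',q_2,q_3$; $p_1'\sim q_1,q_2',q_3'$; $p_2\sim q_1,q_2',q_3$; $p_2'\sim q_1',q_2,q_3'$; $p_3\sim q_1,q_2,q_3'$; $p_3'\sim q_1',q_2',q_3$. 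Every neighbourhood contains all three lists, yet $c(p_1)=c(q_1)=1$, $c(p_1')=c(q_1')=2$, $c(p_2)=c(q_2)=3$, $c(p_2')=c(q_2')=1$, $c(p_3)=c(q_3)=2$, $c(p_3')=c(q_3')=3$ is a proper $L$-colouring. The reason is that richness of $v$'s neighbourhood only guarantees that for any colouring of that neighbourhood, the set $C_v$ of colours used there at least $t+1$ times contains some $F_v\in\mathcal{F}$; to kill the colouring you need $v$'s own list to be that particular $F_v$, and $F_v$ depends on the colouring, which the adversary chooses after seeing all lists. Indeed no purely local event can certify non-choosability: a star with any lists whatsoever is always properly colourable. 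Your Local Lemma / second-moment step would only make the neighbourhood events hold simultaneously, which is not where the difficulty lies.

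What fills this gap in the paper is Alon's two-stage exposure (Theorem~\ref{thm:mindeg}). First a small random set $A$ is chosen (each vertex independently with probability $p=1/(4M(k,\ell)\ln k)$) and its vertices get uniformly random lists from $\mathcal{F}$; a vertex $v\notin A$ is good if each $F\in\mathcal{F}$ appears at least $tk+1$ times among $v$'s neighbours in $A$. One fixes an outcome with $|A|\le 2p|V|$ and at least $|V|/2$ good vertices, and only then union bounds over the at most $k^{|A|}$ colourings $c_A$ of $A$: for each fixed $c_A$, independent random lists on the good vertices are non-extendable with probability at least $1-e^{-|V|/2M(k,\ell)}$, and the computation $k^{2p|V|}e^{-|V|/2M(k,\ell)}\le 1$ closes the argument precisely because $p$ was taken of order $1/(M(k,\ell)\ln k)$. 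This is also where the square really comes from: a neighbour of $v$ helps only if it falls in $A$ \emph{and} draws a prescribed list, an event of probability $p/M(k,\ell)\approx 1/M(k,\ell)^2$, so one needs $\delta(G)\gtrsim M(k,\ell)^2$ --- one factor of $M(k,\ell)$ from the list choice and one from keeping $A$ small enough to beat the union bound, not from ``seeing pairs of lists''. Finally, your max-cut reduction to a bipartite subgraph is sound (improper choosability is monotone under subgraphs) but buys nothing: the paper's argument uses no global structure of $G$, and by Theorem~\ref{thm:boundedspectrum}(1) a bipartite graph is $t$-improperly $(k,2k-2)$-choosable regardless of its minimum degree, so any bad assignment must use spectrum at least $2k-1$ in any case.
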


\noindent
By Theorem~\ref{thm:completemultipartite} for $j=2$, this lower bound is asymptotically correct up to a factor of $2$ (where the asymptotics are in terms of $d$).  Incidentally, the above remarks imply that $\ch^t(G)$ is at least a fixed multiple of $\ln (\ch(G))$.  Since each colour class of a $t$-improper colouring may be greedily properly coloured with at most $t+1$ distinct colours, we see that $\chi^t(G)\ge \chi(G)/(t+1)$ always. It remains unclear if a similar relationship between $\ch^t$ and $\ch$ should hold, and this folklorish question has been discussed at least as far back as 2004.

Kr\'al' and Sgall~\cite{KrSg05} were the first to explicitly consider bounded spectrum list colourings and were able to handle the following questions:
\begin{enumerate}
\item Given a number $k$, does there exist a number $\ell_k$ such that a graph G is properly $k$-choosable if it is properly $(k,\ell_k)$-choosable?
\item Is it true that for some $k\ge3$ and $\ell\ge k$ there exists a number $K_{k,\ell}$ such that each properly $(k,\ell)$-choosable graph G is properly $K_{k,\ell}$-choosable?
\end{enumerate}
They showed that the answer to the first question is mostly `no'.  For the second question, they demonstrated that $K_{k,\ell}$ does not exist if $k \le 2k-2$ and that $K_{k,\ell} = O(2^{4k}k\ln k)$ if $k \ge 2k-1$.  For the second question, we obtain an analogous result for $t$-improper colouring.

\begin{theorem}
\label{thm:boundedspectrum}
Let $t$ be a non-negative integer and let $G$ be a graph.
\begin{enumerate}
\item
If $\chi^t(G)\le 2$, then 
$G$ is $t$-improperly $(k,2k-2)$-choosable.
\item
If $G$ is $t$-improperly $(k,2k-1)$-choosable, then 
$\ch(G)= O(2^{4k}k t\ln k)$.
\end{enumerate}
\end{theorem}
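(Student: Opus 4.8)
I treat the two parts separately; the first is short. For part~(1), fix a $t$-improper $2$-colouring of $G$, i.e.\ a partition $V=V_1\cup V_2$ with $\Delta(G[V_i])\le t$ for $i\in\{1,2\}$, and let $L$ be an arbitrary $(k,2k-2)$-list-assignment. The point is to split the \emph{spectrum} rather than the graph: partition $[2k-2]=A\cup B$ into two blocks of size $k-1$. Since each list is a $k$-subset of $[2k-2]$ and neither block has as many as $k$ elements, every $L(v)$ meets both $A$ and $B$ --- equivalently, the balanced bipartition of $[2k-2]$ witnesses Property~B for the family of all $k$-subsets. I would then colour each $v\in V_1$ from $L(v)\cap A$ and each $v\in V_2$ from $L(v)\cap B$, both nonempty. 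As $A\cap B=\emptyset$, no edge joining $V_1$ to $V_2$ is monochromatic, so every monochromatic edge lies inside some $G[V_i]$; since $\Delta(G[V_i])\le t$, each vertex then has at most $t$ neighbours of its own colour and the colouring is $t$-improper.

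For part~(2) I would first trade the statement about $\ch(G)$ for one about minimum degree. The property of being $t$-improperly $(k,2k-1)$-choosable passes to induced subgraphs: given a list-assignment on $H=G[U]$, extend it arbitrarily to $G$, take a $t$-improper colouring, and restrict to $U$ --- deleting vertices only lowers monochromatic degrees. Hence every induced subgraph of $G$ is again $t$-improperly $(k,2k-1)$-choosable. It therefore suffices to establish a threshold of the form: there is $D=O(2^{4k}k\,t\ln k)$ so that no graph of minimum degree at least $D$ is $t$-improperly $(k,2k-1)$-choosable. Granting this, every induced subgraph of $G$ has minimum degree below $D$, and since the degeneracy is the largest such minimum degree, $\delta^*(G)<D$; the greedy bound $\ch(G)\le\delta^*(G)+1$ recalled before Theorem~\ref{thm:degeneracy} then gives $\ch(G)=O(2^{4k}k\,t\ln k)$.

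The crux is thus to exhibit, on any graph $H$ of large minimum degree, a $(k,2k-1)$-list-assignment admitting no $t$-improper colouring. Here the bounded spectrum is decisive: there are only $\binom{2k-1}{k}=\Theta(2^{2k}/\sqrt{k})$ possible lists, and any two $k$-subsets of $[2k-1]$ necessarily intersect. I would draw each $L(v)$ from a carefully chosen distribution on $\binom{[2k-1]}{k}$ and argue that, with positive probability, every $L$-colouring leaves some vertex with more than $t$ neighbours of its own colour. This is where Property~B re-enters: selecting a colour at each vertex is a $(2k-1)$-partition of $V$ in which, by the intersecting property, adjacent vertices repeatedly compete for shared colours, and a $k$-uniform family \emph{without} Property~B --- of size roughly $2^k$ by the classical bounds --- is what forces some colour class to overload a vertex beyond $t$. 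Chernoff's bound would be used to control, for a fixed colour pattern, the number of monochromatic neighbours a vertex accumulates. The linear factor in $t$ appears precisely here: to drive a monochromatic neighbourhood past $t$ rather than merely past $0$, one replaces each obstructing gadget by $t+1$ near-copies, multiplying the degree requirement by $\Theta(t+1)$ and recovering the Kr\'al'--Sgall bound $O(2^{4k}k\ln k)$ of~\cite{KrSg05} at $t=0$; the exponent $2^{4k}$, of order $k\binom{2k-1}{k}^2$, reflects that the analysis ranges over ordered pairs of list-types at the two ends of an edge.

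The step I expect to be hardest is the union bound over colourings. There are up to $(2k-1)^{|V|}$ candidate colourings, far too many to defeat one at a time against the roughly $2^{-|V|}$ probability that a single fixed pattern survives a random list-assignment. The bounded-spectrum viewpoint is exactly what rescues this: one collapses each colouring to the induced partition of the $\binom{2k-1}{k}$ list-types, so that the count controlling the union bound is governed by $\binom{2k-1}{k}$ rather than by $|V|$, while large minimum degree supplies, locally at each vertex, the $t+1$ monochromatic neighbours demanded. Proving that the random assignment leaves \emph{no} globally consistent $t$-improper colouring --- reconciling this local forcing with the global partition --- is the delicate point, and it is exactly the detailed minimum-degree result of Section~\ref{sec:mindeg} from which part~(2) is intended to follow.
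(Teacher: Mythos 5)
Your proposal is correct and takes essentially the same route as the paper: part~(1) is exactly the paper's argument that $M(k,2k-2)=\infty$ (your balanced split of $[2k-2]$ is precisely the Property~B witness used in Proposition~\ref{prop:bipartite,propB}(2)), and part~(2) is the paper's derivation via Theorem~\ref{thm:mindeg} at $\ell=2k-1$, where $M(k,2k-1)=\binom{2k-1}{k}$ makes the threshold $D$ of that theorem $O(2^{4k}kt\ln k)$. The only material difference is that you spell out the heredity/degeneracy/greedy reduction from the minimum-degree statement to the bound on $\ch(G)$, which the paper leaves implicit.
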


\noindent
Regarding the first question, it is largely left open to show the existence of graphs that are $t$-improperly $(k,\ell)$-choosable but not $t$-improperly $(k,\ell+1)$-choosable for all $3\le k\le \ell$ --- Kr\'al' and Sgall showed this in the case $t=0$.  The present research was partly motivated by a related question on a recently-launched theoretical computer science online research forum~\cite{Epp10}, in which it was asked whether a singly-exponential-time test for proper $k$-choosability can be obtained.  This question is still open for improper $k$-choosability.

\section{Property B and complete multipartite graphs}
\label{sec:propB}

In this section, we draw the connection between Property B and improper list colouring of complete multipartite graphs.  We begin by considering bipartite graphs, then extend to multipartite graphs.

Following Erd\H{o}s, Rubin and Taylor~\cite{ERT80}, we derive a good estimate of the $t$-improper choice number of $t$-improperly $2$-colourable graphs
by relating the problem to the extremal analysis of Property B~\cite{Erd63,Erd64,Erd69,ErHa61}.
A family $\mathcal{F}$ of sets has {\em Property B} if there exists a set $B$ which meets every set in $\mathcal{F}$ but contains no set in $\mathcal{F}$.
Property B for a family of $k$-sets is equivalent to proper $2$-colourability of $k$-uniform hypergraphs.

For a fixed integer $k\ge2$, let $M(k)$ be the cardinality of a smallest family of $k$-sets that does not have Property B.
For fixed integers $k,\ell\ge 2$, let $M(k,\ell)$ be the cardinality of a smallest family of $k$-subsets of $[\ell]$ that does not have Property B.  Note that $M(k,2k-1)=\binom{2k-1}{k}$ since the collection $\binom{[2k-1]}{k}$ of all $k$-subsets of $[2k-1]$ does not have Property B, whereas any proper subcollection of $\binom{[2k-1]}{k}$ has Property B.  It also holds that $M(k,\ell) = \infty$ if $\ell \le 2k-2$, as every subcollection of $\binom{[2k-2]}{k}$ has Property B.  Clearly, $M(k) = \inf_{\ell\ge 2k-1}M(k,\ell)$.

The best general upper bound on $M(k)$ is a probabilistic construction due to Erd\H{o}s~\cite{Erd64}, while the best lower bound is due to an ingenious application of ``semirandom local recolouring'' by Radhakrishnan and Srinivasan~\cite{RaSr00}:
\begin{align}
\label{eqn:propB}
\Omega\left(\sqrt{\frac{k}{\ln k}} 2^k\right)
\le
M(k)
\le
O\left(k^2 2^k\right).
\end{align}

This next proposition generalises a theorem of Erd\H{o}s, Rubin and Taylor.  Similar results were obtained by Hull~\cite{Hul97}.

\begin{proposition}
\label{prop:bipartite,propB}
Let $k,\ell$ be integers such that $2\le k\le \ell$.
\begin{enumerate}
\item
There is a $(k,\ell)$-list-assignment $L$ of the complete properly bipartite graph $K_{n_1,n_2}$, with $n_1\ge M(k,\ell)$ and $n_2\ge (tk+1) M(k,\ell)$, such that there is no $t$-improper $L$-colouring.
\item
Given a non-negative integer $t$, if $G = (V,E)$ is a $t$-improperly $2$-colourable graph and $|V| < M(k,\ell)$, then $G$ is $t$-improperly $(k,\ell)$-choosable.
\end{enumerate}
\end{proposition}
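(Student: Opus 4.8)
The plan is to realise both statements through the standard Erd\H{o}s--Rubin--Taylor dictionary between list colourings of complete bipartite graphs and blocking sets for the family of lists, adapted to accommodate the defect parameter $t$. The central observation is that in a $t$-improper colouring of a complete bipartite graph with parts $A$ and $B$, a colour $x$ may be shared across the cut only in a very restricted way: if some vertex of $A$ receives $x$, then at most $t$ vertices of $B$ may receive $x$, and symmetrically. Consequently the set $C_A$ of colours appearing on $A$ behaves almost like a blocking set --- every vertex of $A$ forces $C_A$ to meet its list, while any colour of $C_A$ can be reused on $B$ at most $t$ times before over-saturating a neighbour across the cut.

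For the second (positive) part, I would first fix a $(k,\ell)$-list-assignment $L$ and a $t$-improper $2$-colouring $(V_1,V_2)$ of $G$, so that each $G[V_i]$ has maximum degree at most $t$. The family $\{L(v):v\in V\}$ consists of fewer than $M(k,\ell)$ distinct $k$-subsets of $[\ell]$, so by the very definition of $M(k,\ell)$ it has Property B; let $B$ be a set meeting every list but containing none. Since $B$ meets $L(v)$ while $L(v)\not\subseteq B$, each vertex has an available colour inside $B$ and one outside $B$. I would then colour every $v\in V_1$ from $L(v)\cap B$ and every $v\in V_2$ from $L(v)\setminus B$, chosen arbitrarily. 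Edges of the cut are never monochromatic, since their endpoints lie in different colour blocks, so the only same-coloured neighbours of any vertex lie within its own part; as $G[V_i]$ has maximum degree at most $t$, this gives at most $t$ such neighbours, and the colouring is $t$-improper.

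For the first (negative) part, I would run the construction with an asymmetric blow-up. Let $\mathcal{F}=\{F_1,\dots,F_m\}$ be a family of $k$-subsets of $[\ell]$ without Property B, with $m=M(k,\ell)$. On side $A$ I place vertices $a_1,\dots,a_m$ with $L(a_i)=F_i$; on side $B$, for each $i$ I place a bundle of $tk+1$ vertices all carrying the list $F_i$, so $|B|=(tk+1)m$. Suppose a $t$-improper $L$-colouring existed. Its image $C_A$ on side $A$ meets every $F_i$, so since $\mathcal{F}$ lacks Property B, $C_A$ must contain some $F_j$. Every vertex of the $j$-th bundle is then coloured from $F_j\subseteq C_A$, and as $|F_j|=k$, the pigeonhole principle forces at least $\lceil(tk+1)/k\rceil=t+1$ of these $tk+1$ vertices to share a single colour $x^\ast\in F_j\subseteq C_A$. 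But $x^\ast\in C_A$ means some $a_i$ is coloured $x^\ast$, and this vertex then has at least $t+1$ neighbours in $B$ of its own colour, contradicting $t$-improperness. To obtain arbitrary $n_1\ge M(k,\ell)$ and $n_2\ge(tk+1)M(k,\ell)$, I would pad each side with further vertices and extend $L$ arbitrarily; a $t$-improper colouring of the padded graph would restrict to one of the core construction, which is impossible.

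The routine ingredients --- the existence of the available colours, the pigeonhole count, and the padding --- are mechanical. The one step demanding care is pinning down the blow-up factor $tk+1$ and recognising that the contradiction is cleanest on the \emph{un-blown-up} side $A$: once $C_A\supseteq F_j$, the concentration of $t+1$ equal colours among only $k$ possibilities is precisely what over-saturates a neighbour across the cut. This asymmetry --- blowing up only $B$ --- is also what accounts for the lopsided hypotheses $n_1\ge M(k,\ell)$ against $n_2\ge(tk+1)M(k,\ell)$ in the statement.
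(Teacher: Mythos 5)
Your proposal is correct and follows essentially the same argument as the paper: part (2) uses the Property B witness $B$ to colour one part from $L(v)\cap B$ and the other from $L(v)\setminus B$, and part (1) uses the same asymmetric construction with $tk+1$ copies of each list on one side, the same pigeonhole count, and the same contradiction across the cut. Your explicit padding argument for arbitrary $n_1,n_2$ is a minor tidying of a step the paper leaves implicit.
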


\begin{proof}
The proofs are as follows.
\begin{enumerate}
\item
We define $L$ as follows.  Let $(V_1,V_2)$ be the bipartition of the vertices of $K_{n_1,n_2}$ with $|V_1|=n_1$ and $|V_2|=n_2$.  Let $\mathcal{F}$ be a smallest family of $k$-subsets of $[\ell]$ that does not have Property B. Assign a copy of each of the $M(k,\ell)$ $k$-sets in $\mathcal{F}$ to the vertices of $V_1$.  Assign $tk+1$ copies of each of the $k$-sets in $\mathcal{F}$ to the vertices of $V_2$.  Now, for a contradiction, assume that there exists a $t$-improper $L$-colouring $c$.  Necessarily, the set $\cup_{v\in V_1}\{c(v)\}$ meets every member of $\mathcal{F}$; therefore, since $\mathcal{F}$ does not have Property B, there is some $F\in \mathcal{F}$ such that $F \subseteq \cup_{v\in V_1}\{c(v)\}$.  Since there are $tk+1$ occurrences of $F$ as the list of some vertex in $V_2$, by the pigeonhole principle there must be some $f\in F$ such that $c(v)=f$ for at least $t+1$ vertices $v$ of $V_2$.  As every vertex in $V_1$ is adjacent to every vertex in $V_2$, this contradicts that $c$ is a $t$-improper colouring.
\item
Let $(V_1,V_2)$ be a partition of $V$ such that the induced subgraphs $G[V_1]$ and $G[V_2]$ each have maximum degree at most $t$.
Let $L$ be any $(k,\ell)$-list-assignment.  The family $\mathcal{F}=\{L(v)\}_{v\in V}$ is a family of $k$-subsets of $\ell$ of size less than $M(k,\ell)$, and so must have Property B.  Let $B$ be a set that witnesses Property B for $\mathcal{F}$, i.e.~$B$ meets every set in $\mathcal{F}$ but contains no set in $\mathcal{F}$.  We may use $B$ to define a colouring $c$ as follows: for $v\in V_1$, let $c(v)$ be chosen from $L(v) \cap B$; for $v\in V_2$, let $c(v)$ be chosen from $L(v)\setminus B$.  Since the vertices of $V_1$ are only coloured from $B$, the vertices of $V_2$ are only coloured from $[\ell]
\setminus B$, and $G[V_1]$, $G[V_2]$ each have maximum degree at most $t$, $c$ is a $t$-improper $L$-colouring.
\end{enumerate}
\end{proof}

We can extend Proposition~\ref{prop:bipartite,propB}, taking the strategy of Kostochka~\cite{Kos02}, by considering $t$-improperly multipartite graphs and by using an extension of Property B.  For $j \ge 2$, we say that a family $\mathcal{F}$ of sets has {\em Property B($j$)} if there exist mutually disjoint sets $B_1,\dots,B_j$ each of which meets every set in $\mathcal{F}$.  Clearly, Property B($2$) is equivalent to Property B.  Property B($j$) for a family of $k$-sets is equivalent to $j$-panchromaticity of $k$-uniform hypergraphs.

For fixed integers $j,k\ge2$, let $M(j,k)$ be the cardinality of a smallest family of $k$-sets that does not have Property B($j$).
For fixed integers $j,k,\ell$, with $2\le j,k\le \ell$, let $M(j,k,\ell)$ be the cardinality of a smallest family of $k$-subsets of $[\ell]$ that does not have Property B($j$).
Clearly, $M(j,k) = \inf_{\ell\ge j,k}M(j,k,\ell)$.

For $j,k\ge2$, good general bounds on $M(j,k)$ have recently been obtained by Shabanov~\cite{Sha10,Sha11}:
\begin{align}
\label{eqn:propBj}
\Omega\left(\frac1j\left(\frac{k}{j\ln k}\right)^{1/3}\left(\frac{j}{j-1}\right)^k\right)
\le
M(j,k)
\le
O\left(k^2\left(\frac{j}{j-1}\right)^k\ln j\right).
\end{align}
As for Property B, these bounds were obtained via probabilistic techniques.

\begin{theorem}
\label{thm:multipartite,propB}
Let $k,\ell$ be integers such that $2\le j, k\le \ell$.
\begin{enumerate}
\item
There is a $(k,\ell)$-list-assignment $L$ of the complete properly $j$-partite graph $K_{n_1,\dots,n_j}$, with $n_1 \ge M(j,k,\ell)$ and $n_i\ge (tk+1) M(j,k,\ell)$ for any $i\in\{2,\dots,j\}$, such that there is no $t$-improper $L$-colouring.
\item
Given a non-negative integer $t$, if $G = (V,E)$ is a $t$-improperly $j$-colourable graph and $|V| < M(j,k,\ell)$, then $G$ is $t$-improperly $(k,\ell)$-choosable.
\end{enumerate}
\end{theorem}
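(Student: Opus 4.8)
The plan is to imitate the proof of Proposition~\ref{prop:bipartite,propB} almost verbatim, substituting Property B($j$) for Property B throughout and letting the $t$-improper condition across the complete join do the work that the bipartition did in the case $j=2$. For both parts I would fix a smallest family $\mathcal{F}$ of $k$-subsets of $[\ell]$ witnessing the relevant (non-)existence of Property B($j$), so that $|\mathcal{F}| = M(j,k,\ell)$.

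For the first part I would set up the list assignment following Kostochka's strategy: place one copy of each member of $\mathcal{F}$ on vertices of $V_1$, and $tk+1$ copies of each member on each of $V_2,\dots,V_j$ (the lower bounds on $n_1,\dots,n_j$ ensure enough vertices are available, any remaining vertices being given arbitrary lists). Assuming for contradiction a $t$-improper $L$-colouring $c$, I would produce $j$ mutually disjoint sets, each meeting every member of $\mathcal{F}$, contradicting the choice of $\mathcal{F}$. Writing $n_i(f)$ for the number of vertices of $V_i$ receiving colour $f$, I would take $B_i = \{f : n_i(f)\ge t+1\}$ for $i\ge2$ and $B_1 = \{c(v):v\in V_1\}$. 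Each $B_i$ with $i\ge2$ meets every $F\in\mathcal{F}$ by pigeonhole, since the $tk+1$ copies of $F$ in $V_i$ are each coloured from the $k$-set $F$, forcing some colour of $F$ to repeat at least $t+1$ times; and $B_1$ meets every $F$ because the single copy of $F$ in $V_1$ is coloured from $F$.

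The crux of the first part --- and the one genuinely new point over the bipartite case --- is verifying mutual disjointness, and this is exactly where $t$-improperness enters. If a colour $f$ lay in $B_i\cap B_{i'}$ for distinct $i,i'\ge2$, then a vertex of $V_i$ coloured $f$ would have at least $t+1$ same-coloured neighbours inside $V_{i'}$, since $V_i$ and $V_{i'}$ are completely joined, a contradiction; and if $f\in B_1$, then some $v\in V_1$ has colour $f$, so $\sum_{i\ge2}n_i(f)\le t$, whence $n_i(f)\le t$ for every $i\ge2$ and thus $f\notin B_i$. Hence $B_1,\dots,B_j$ are pairwise disjoint and together witness Property B($j$) for $\mathcal{F}$, the desired contradiction.

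For the second part I would proceed dually. Fix a partition $(V_1,\dots,V_j)$ with each $G[V_i]$ of maximum degree at most $t$, and let $L$ be an arbitrary $(k,\ell)$-list-assignment. Since $\mathcal{F} = \{L(v)\}_{v\in V}$ has fewer than $M(j,k,\ell)$ members, it does have Property B($j$); let $B_1,\dots,B_j$ be mutually disjoint sets each meeting every member. For each $i$ and each $v\in V_i$, I would colour $v$ from the set $L(v)\cap B_i$, which is nonempty because $B_i$ meets $L(v)$. Disjointness of the $B_i$ guarantees that no edge between two distinct parts is monochromatic, so every monochromatic edge lies inside a single part; as each $G[V_i]$ has maximum degree at most $t$, the resulting $L$-colouring is $t$-improper. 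I expect the only step demanding real care to be the bookkeeping in the first part, namely choosing the thresholded sets $B_i$ correctly and confirming their disjointness, while everything else follows the bipartite template closely.
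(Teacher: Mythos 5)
Your proposal is correct and follows essentially the same route as the paper's proof: the same list assignment (one copy of each member of $\mathcal{F}$ on $V_1$, $tk+1$ copies on each other part), the same thresholded colour sets (the paper's $C_i$ are your $B_i$ for $i\ge 2$), and the same dual argument for the second part. The only difference is presentational: where the paper dismisses mutual disjointness with ``otherwise $c$ would not be $t$-improper,'' you spell out the pigeonhole/complete-join argument explicitly, which is a faithful expansion rather than a new idea.
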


\begin{proof}
The proofs mirror those in the proof of Proposition~\ref{prop:bipartite,propB}.
\begin{enumerate}
\item
We define $L$ as follows.  Let $(V_1,\dots,V_j)$ be the $j$-partition of the vertices of $K_{n_1,\dots,n_j}$ with $|V_i|=n_i$ for any $i\in\{1,\dots,j\}$.  Let $\mathcal{F}$ be a smallest family of $k$-subsets of $[\ell]$ that does not have Property B($j$).  Assign a copy of each of the $M(j,k,\ell)$ $k$-sets in $\mathcal{F}$ to the vertices of $V_1$.  For each $i\in\{2,\dots,j\}$, assign $tk+1$ copies of each of the $k$-sets in $\mathcal{F}$ to the vertices of $V_i$.  Now, for a contradiction, assume that there exists a $t$-improper $L$-colouring $c$.
Obviously, the set $\cup_{v\in V_1}\{c(v)\}$ meets every member of $\mathcal{F}$.
For each $i\in\{2,\dots,j\}$, let $C_i$ be the set of colours $f$ such that there are at least $t+1$ vertices $v\in V_i$ such that $c(v)=f$.  We observe that $C_i$ meets every member of $\mathcal{F}$.  Otherwise, if $F$ is some $k$-set in $\mathcal{F}$ that does not intersect $C_i$, then, since $F$ occurs $tk+1$ times as the list of some vertex in $V_i$, it follows by the pigeonhole principle that there is some $f\in F$ such that $c(v)=f$ for at least $t+1$ vertices $v$ of $V_i$, a contradiction.
Since $K_{n_1,\dots,n_j}$ is a complete multipartite graph, the sets $\cup_{v\in V_1}\{c(v)\}$, $C_2,\dots,C_j$ are mutually disjoint, otherwise $c$ would not be $t$-improper.  But then these sets witness Property B($j$) for $\mathcal{F}$, which is a contradiction.
\item
Let $(V_1,\dots,V_j)$ be a partition of $V$ such that $\Delta(G[V_i])\le t$ for any $i\in\{1,\dots,j\}$.
Let $L$ be any $(k,\ell)$-list-assignment.  The family $\mathcal{F}=\{L(v)\}_{v\in V}$ is a family of $k$-subsets of $\ell$ of size less than $M(j,k,\ell)$, and so must have Property B($j$).  Let $B_1,\dots,B_j$ be the sets that witnesses Property B($j$) for $\mathcal{F}$, i.e.~they are mutually disjoint sets such that, for any $i\in\{1,\dots,j\}$, $B_i$ meets every set in $\mathcal{F}$.  We may use $B_1,\dots,B_j$ to define a colouring $c$ as follows: for any $i\in\{1,\dots,j\}$, for any $v\in V_i$, let $c(v)$ be chosen from $L(v) \cap B_i$.  Since for every $i\in\{1,\dots,j\}$ the vertices of $V_i$ are only coloured from $B_i$ and $G[V_i]$ has maximum degree at most $t$, $c$ is a $t$-improper $L$-colouring.
\end{enumerate}
\end{proof}

\section{A bound in terms of the minimum degree}
\label{sec:mindeg}

The following is a generalisation of a result of Kr\'al' and Sgall~\cite{KrSg05}.  We note that even in the special case $t=0$, this theorem extends their result by exposing a more direct connection with Property B for families of $k$-sets drawn from $[\ell]$.

\begin{theorem}
\label{thm:mindeg}
Fix integers $t,k,\ell$, with $t \ge 0$ and $2 \le k \le \ell$, and let
\begin{align*}
D = 12 (M(k,\ell))^2 \cdot \ln M(k,\ell) \cdot \ln k \cdot
\left(1+\sqrt{1+\frac{(tk+1)}{3\ln M(k,\ell)}}\right)^2.
\end{align*}
If $G = (V,E)$ is a graph with $\delta(G) \ge D$, 
then $G$ is not $t$-improperly $(k,\ell)$-choosable.
\end{theorem}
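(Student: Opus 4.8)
The plan is to prove the contrapositive form directly, by exhibiting a single $(k,\ell)$-list-assignment $L$ under which no $t$-improper colouring exists; this shows $G$ is not $t$-improperly $(k,\ell)$-choosable. I would obtain $L$ by the probabilistic method, modelling the construction on the deterministic one in Proposition~\ref{prop:bipartite,propB}(1) but replacing the complete bipartite structure used there with the neighbourhoods supplied by the minimum-degree hypothesis. Write $m = M(k,\ell)$ and fix a smallest family $\mathcal{F} = \{F_1,\dots,F_m\}$ of $k$-subsets of $[\ell]$ without Property B. I would assign to each vertex $v$, independently, a list $L(v)$ drawn uniformly at random from $\mathcal{F}$, and aim to show that with positive probability this $L$ admits no $t$-improper colouring.

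The combinatorial engine is the two-step argument behind Proposition~\ref{prop:bipartite,propB}(1), localised to a neighbourhood. Suppose a vertex $u$ has the property that, for every $i$, its neighbourhood contains at least $tk+1$ vertices carrying list $F_i$. Then for any colouring $c$: the colours used on $N(u)$ meet every member of $\mathcal{F}$ (each $F_i$ is realised by a neighbour coloured from $F_i$), so, $\mathcal{F}$ lacking Property B, they must contain some $F_j$ entirely; and among the $\geq tk+1$ neighbours with list $F_j$, the pigeonhole principle forces some colour of $F_j$ to repeat at least $t+1$ times. So I would try to use the minimum degree to guarantee, by concentration, that neighbourhoods are ``rich'' enough to run both the transversal step and the pigeonhole step.

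For the quantitative estimate I would fix $u$ and a set $F_i$ and control the number of neighbours of $u$ receiving list $F_i$, which is distributed as $\Bin(\deg(u),1/m)$ with mean $\mu \geq D/m$. The Chernoff bound shows this count falls below $tk+1$ with probability at most $2\exp\!\big(-(\mu-(tk+1))^2/(3\mu)\big)$, and a union bound over the $m$ choices of $F_i$ then controls the per-neighbourhood failure. Requiring this to beat an $\Omega(\ln m)$ budget is a quadratic condition on $\sqrt{\mu}$ whose boundary is $\mu = \Theta(\ln m)\,\big(1+\sqrt{1+(tk+1)/(3\ln m)}\big)^2$; this is exactly the $\ln M(k,\ell)\cdot\big(1+\sqrt{1+(tk+1)/(3\ln M(k,\ell))}\big)^2$ portion of the displayed $D$. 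The remaining factor of order $M(k,\ell)\ln k$ I would expect to be the price of the next, harder step.

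The step I expect to be the main obstacle is the one the bipartite construction got for free: there, every transversal representative was adjacent to every pigeonhole vertex, so whichever colour of $F_j$ repeated, a representative of that colour witnessed a monochromatic excess across the complete join. A graph of large minimum degree need contain no such complete bipartite configuration --- high-girth graphs have independent neighbourhoods --- and indeed richness at a single vertex $u$ cannot by itself force a violation, since an adversary can always keep any one colour of $L(u)$ scarce on $N(u)$ (each neighbour has $k\geq 2$ alternatives). The crux is therefore to promote the local transversal/pigeonhole phenomenon, occurring across the overlapping neighbourhoods of the random assignment, to a \emph{global} obstruction that no colouring can evade, and to do so with a failure probability bounded \emph{independently of $|V|$}. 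It is this globalisation --- in the spirit of Alon's degree versus choice-number estimates --- that I expect both to consume the extra factor of $M(k,\ell)\ln k$ and to constitute the genuine difficulty of the proof.
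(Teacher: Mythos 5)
Your local machinery is exactly the paper's: the transversal-plus-pigeonhole step (the colours on a rich neighbourhood meet every member of $\mathcal{F}$, hence contain some $F_j$ entirely, whose $tk+1$ carriers then force a colour repeated $t+1$ times) and the Chernoff estimate for the count $\Bin(\deg(u),\,\cdot\,)$ falling below $tk+1$ are both present, and you have correctly located where each factor of $D$ enters. But the step you flag as ``the genuine difficulty'' --- promoting the local obstruction to a global one with failure probability independent of $|V|$ --- is precisely the content of the proof, and your proposal does not supply it: as written it is a plan together with an accurate description of the hole in the plan. Moreover, your single-round scheme (random lists on all of $V$ at once) cannot be completed by a direct union bound, because a $t$-improper $L$-colouring is itself a function of the random lists; one would have to union over all candidate colourings of $V$, of which there are exponentially many in $|V|$, against a per-colouring failure probability that does not compensate.

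The paper's fix (following Alon, refined by Kr\'al' and Sgall) is a two-round exposure. Round one: take a sparse random set $A$, each vertex kept independently with probability $p = 1/(4M(k,\ell)\ln k)$ --- this sparsity is where your ``extra'' factor $M(k,\ell)\ln k$ actually lives --- and assign independent uniform random lists from $\mathcal{F}$ only on $A$; call $v \notin A$ \emph{good} if every $F \in \mathcal{F}$ appears at least $tk+1$ times among the lists of $v$'s neighbours in $A$. Markov's inequality plus your Chernoff computation yields an outcome with $|A| \le 2p|V|$ and at least $|V|/2$ good vertices; fix such an outcome. Round two: any $t$-improper list colouring of $G$ restricts to one of at most $k^{|A|}$ colourings of $G[A]$, so it suffices to union bound over those partial colourings only. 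For a fixed colouring $c_A$ of $A$, every good vertex $v$ has, by your own local lemma applied to $N(v)\cap A$, a set $F_v \in \mathcal{F}$ all of whose colours already appear at least $t+1$ times on $N(v)\cap A$; assigning fresh independent uniform random lists to the good vertices, each such $v$ receives exactly $F_v$ with probability $1/M(k,\ell)$ and is then impossible to colour. Hence the probability that $c_A$ extends is less than $e^{-|V|/2M(k,\ell)}$, and the choice of $p$ gives $k^{2p|V|}e^{-|V|/2M(k,\ell)} = 1$, closing the union bound. The crucial idea your proposal lacks is that the union bound must run over colourings of the small set $A$ alone, which is exactly why the randomness has to be split into two rounds rather than exposed all at once.
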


\begin{proof}
We assume, for a contradiction, that $G$ is $t$-improperly $(k,\ell)$-choosable.
The strategy of proof is the same as that introduced by Alon~\cite{Alo93}, subsequently refined~\cite{Alo00} and then adapted by Kr\'al' and Sgall~\cite{KrSg05}.  There are two stages of randomness.  In one stage, we choose a small random vertex subset $A$ and assign lists to the vertices of $A$ randomly.  After showing that, with positive probability, there are a large number of {\em good} vertices (to be defined below), we fix such a subset $A$ and an assignment of lists.  The good vertices guarantee in the second stage that, with positive probability, the remaining vertices cannot be coloured from random lists.  Without loss of generality, we assume that $D$ is integral.

Let $p = 1/(4M(k,\ell)\ln k)$.  Note that $p < 1/8$ as $M(k,\ell)\ge 3$ for $k\ge 2$.
Let $A$ be a random subset of $V$ with each vertex of $V$ chosen to be in $A$ independently at random with probability $p$.
Since $\Exp(|A|) = p |V|$, by Markov's inequality we have $\Pr(|A| > 2p |V|) \le 1/2$.
Let $\mathcal{F}$ be a smallest family of $k$-subsets of $[\ell]$ that does not have Property B.
We define a random $(k,\ell)$-list-assignment $L_A$ of $G[A]$ as follows: for each $v\in A$ independently, let $L_A(v)$ be a uniformly random element of $\mathcal{F}$.
We call a vertex $v$ {\em good} if $v\in V\setminus A$ and for any $F\in \mathcal{F}$ there are at least $tk+1$ neighbours $v'$ of $v$ such that $v'\in A$ and $L_A(v') = F$.

The probability that a vertex $v\in V\setminus A$ is not good is the probability that for some $F\in \mathcal{F}$ there are at most $tk$ neighbours $v'$ of $v$ such that $v'\in A$ and $L_A(v')=F$.  For each fixed $F\in\mathcal{F}$, since there are at least $D$ neighbours of $v$, the probability that there are at most $tk$ neighbours $v'$ of $v$ such that  $v'\in A$ and $L_A(v')=F$ must be at most
\begin{align*}
&\Pr\left(\Bin\left(D,\frac{p}{M(k,\ell)}\right)< tk + 1\right)\\
& \le
\Pr\left(\Bin\left(D,\frac{p}{M(k,\ell)}\right)-\frac{Dp}{M(k,\ell)}< -\frac{\sqrt{3D}}{M(k,\ell)}\sqrt{\frac{\ln M(k,\ell)}{\ln k}}\right)\\
&\le
2\exp(-4\ln M(k,\ell))
\end{align*}
due to Chernoff's bound (and some hidden routine algebraic manipulations).  Therefore, the probability that there is some set $F\in\mathcal{F}$ that certifies that $v$ is not good must be at most $M(k,\ell)\cdot 2\exp(-4\ln M(k,\ell))< 1/8$ (since $M(k,\ell)\ge 3$ for $k\ge 2$).

The probability that a vertex $v\in V$ is not good is at most the probability that $v\in A$, which is $p$, plus the probability computed above; thus, in total the probability is less than $p + 1/8 < 1/4$.  It then follows by Markov's inequality that the number of vertices that are not good is at most $|V|/2$ with probability strictly greater than $1/2$.  So there exists a set $A\subseteq V$ and a $(k,\ell)$-list-assignment $L_A$ such that $|A|\le 2p|V|$ and the number of good vertices is at least $|V|/2$.  Fix such a set $A$ and $(k,\ell)$-list-assignment $L_A$ for the remainder of the proof.  Let $A^*$ be the set of good vertices.

There are at most $k^{|A|}$ possibilities for an arbitrary $t$-improper $L_A$-colouring of $G[A]$.
We fix one such colouring $c_A$ and show that with high probability there is a $(k,\ell)$-list-assignment such that $c_A$ cannot be extended to a $t$-improper list colouring of $G[A\cup A^*]$.  We define a random $(k,\ell)$-list-assignment $L_{A^*}$ of $G[A^*]$ by letting, for each $v\in A^*$ independently, $L_{A^*}(v)$ be a uniformly random element of $\mathcal{F}$.
Consider a vertex $v\in A^*$.
We define $C_v$ to be the set of colours $c$ such that there are at least $t+1$ neighbours $v'$ of $v$ such that $v'\in A$ and $c_A(v')=c$.  We first observe that $C_v$ meets every member of $\mathcal{F}$.  Otherwise, let $F$ be some $k$-set in $\mathcal{F}$ that does not intersect $C_v$ and note that, since $v$ is good, $F$ occurs $tk+1$ times as the $L_A$-list of some neighbour of $v$.  But then, since $c_A$  is an $L_A$-colouring, it follows by the pigeonhole principle that there is some $f\in F$ such that $c_A(v')=f$ for at least $t+1$ vertices $v'$ of $A$, contradicting that $F$ does not meet $C_v$.  Now, since $\mathcal{F}$ satisfies Property B, there must be some $F_v \in \mathcal{F}$ such that $F_v\subseteq C_v$.  By the definition of $C_v$, if the random assignment results in $L_{A^*}(v) = F_v$, then $v$ cannot be $t$-improperly coloured from its list.  Since the choice of $L_{A^*}(v)$  is independent, the probability that all vertices of $A^*$ can be coloured from their lists is at most
\begin{align*}
\left(1-\frac{1}{M(k,\ell)}\right)^{|V|/2}<e^{-|V|/2M(k,\ell)}.
\end{align*} 
Note that
\begin{align*}
k^{|A|}e^{-|V|/2M(k,\ell)}
\le
k^{2p|V|}e^{-|V|/2M(k,\ell)}
= 1.
\end{align*}
It then follows that with positive probability there is a $(k,\ell)$-list-assignment $L$ of $G[A\cup A^*]$ such that there is no $t$-improper $L$-colouring of $G[A\cup A^*]$.
\end{proof}

\section{Proofs}
\label{sec:proofs}

\begin{proof}[Proof of Theorem~\ref{thm:completemultipartite}]
Combine Theorem~\ref{thm:multipartite,propB} and the bounds on $M(j,k)$ in~\eqref{eqn:propBj}.
\end{proof}

\begin{proof}[Proof of Theorem~\ref{thm:degeneracy}]
Combine Theorem~\ref{thm:mindeg} with the bounds on $M(k)$ in~\eqref{eqn:propB}.
\end{proof}

\begin{proof}[Proof of Theorem~\ref{thm:boundedspectrum}]
At the beginning of Section~\ref{sec:propB}, we observed that $M(k,2k-2)=\infty$ and $M(k,2k-1)=\binom{2k-1}{k}$.  The theorem follows by substituting these values into Proposition~\ref{prop:bipartite,propB} and Theorem~\ref{thm:mindeg}.
\end{proof}

\section{Conclusion}
\label{sec:conclusion}

We have shown that the correspondence between Property B and proper choosability is also present between Property B and improper choosability.  Using this association, we have shown new results for improper list colouring on complete multipartite graphs, graphs of bounded minimum degree and list assignments whose list union is bounded.  Our findings generalise classical results on proper list colouring.

Let us reiterate two open questions mentioned in Section~\ref{sec:results}:
\begin{enumerate}
\item For fixed $t\ge 1$, what is the fastest growing function $f:\mathbb{Z}^+\to\mathbb{Z}^+$ such that $\ch^t(G)\ge f(\ch(G))$ for all $G$?  We observed that $k \ge f(k)\ge \Omega(\ln k)$.  Could $f(k)$ be as high as $k/t$?
\item
For fixed $t\ge 1$, given a number $k$, does there exist a number $\ell_{t,k}$ such that a graph G is $t$-improperly $k$-choosable if it is $t$-improperly $(k,\ell_{t,k})$-choosable?
\end{enumerate}

Theorems~\ref{thm:completemultipartite} and~\ref{thm:degeneracy} suggest that the relaxation from proper to improper colouring does not significantly lower the corresponding choice numbers.  A similar phenomenon was also observed in the setting of improperly colouring random graphs, in which it was shown that even permitting $t$ to grow modestly in terms of the order of the random graph $G_{n,p}$ does not force the $t$-improper chromatic number (let alone the $t$-improper choice number) significantly lower than $n/(2\log_b n)$ a.a.s.~with $b = 1/(1-p)$~\cite{KaMc10}.  It would be interesting to determine how much we may allow $t$ to increase as a function of the order of the graph or the maximum degree before we discern a marked change in the asymptotic behaviour of the $t$-improper list chromatic numbers in the settings we have discussed.

\bibliographystyle{abbrv}%abbrv
\bibliography{imchdegrees}

%%%%%%%%%%%%%%%%%%%%%%%%%%%%%%%%%%%%%%%%%%%%%%%%%%%%%%%%%%%%%%%%%%%%%%%%
\end{document}